\numberwithin{equation}{section}
\numberwithin{figure}{section}
\newtheorem{theorem}{Theorem}[section]
\newtheorem{lemma}[theorem]{Lemma}
\theoremstyle{definition}
\theoremstyle{remark}
\date{}
\begin{document}

{\Large {\bf On the Boundedness of Solutions of a Rational System
with a Variable Coefficient }

\vspace{.3in}

\vspace{.2in}


\noindent E. CAMOUZIS

\vspace{.3in}

\noindent {\it American College of Greece, Deree College, 6
Gravias Street, Aghia Paraskevi, 15342 Athens, Greece }

\vspace{.2in}

\begin{abstract}
\noindent We establish the boundedness character of solutions of a
system of rational difference equations with a variable
coefficient.

\end{abstract}

\section{Introduction}

\vspace{.2in} Consider the system of difference equations
\begin{equation}\label{general}
x_{n+1}=\frac{x_n}{y_n} \;\; \text{and} \;\; y_{n+1}=x_n+\gamma_n
y_n, \;\; n=0,1,\ldots
\end{equation}
where $\{\gamma_n\}_{n=0}^{\infty}$ is an arbitrary sequence of
positive real numbers and the initial conditions $x_0$ and $y_0$
are positive real numbers.

\noindent When $\gamma_n=\gamma>1$, the solution
$\{x_n,y_n\}_{n=0}^{\infty}$ converges to $(0,\infty)$ and so it
is unbounded. When $\gamma=1$, the solution
$\{x_n,y_n\}_{n=0}^{\infty}$ satisfies the identity
\[
x_n+y_n+\frac{x_n}{y_n}+\frac{1}{y_n}=x_0+y_0+\frac{x_0}{y_0}+\frac{1}{y_0}=A>2
\]
and it is easy to see that it converges to
\[
(0,\frac{A+\sqrt{A^{2}-4}}{2})
\]
and so is bounded. Finally, when $0<\gamma<1$, it was established
in \cite{CAM} that both components of every solution
$\{x_n,y_n\}_{n=0}^{\infty}$ are bounded from above by a positive
constant. The proof that was presented in \cite{CAM} was based on
the properties of the double sequence of finite sums
\[
\phi(i,n)=\sum_{k=0}^{n} \gamma^{k} x_{k+i+1}, \;\;
i=0,1,\ldots,\;\; n=0,1,\ldots,
\]
for which, as it was shown in \cite{CAM}, it holds that
\[
\lim_{n \rightarrow \infty} \phi(i,n)=\frac{\gamma+x_i}{y_i}, \;\;
i=0,1,\ldots\;.
\]
In this paper we extend the ideas of the proof presented in
\cite{CAM} to establish that when $\{\gamma_n\}_{n=0}^{\infty}$ is
bounded from below and from above by two positive constants
$\gamma^{\prime}$ and $\gamma$, and more precisely,
\[
0<\gamma^{\prime}\leq \gamma_n\leq \gamma<1,
\]
both components of every solution of System (\ref{general}) are
bounded from above by a positive constant. It was also shown in
\cite{CAM} that when $\gamma_n=\gamma \in (0,1)$ and the initial
conditions are positive real numbers, the dynamics of System
(\ref{general}), in terms of boundedness, are equivalent with the
dynamics of the system
\begin{equation}\label{w}
x_{n+1}=\frac{x_n y_n}{x_n+\gamma}\;\; \text{and} \;\;
y_{n+1}=\frac{y_n}{x_n+\gamma}, \;\; n=0,1,\ldots\;.
\end{equation}
More precisely, as it was shown in \cite{CAM}, given a solution
$\{x_n,y_n\}_{n=0}^{\infty}$  of System (\ref{general}) with
$\gamma_n=\gamma>0$, the sequence $\{x_n,w_n\}_{n=0}^{\infty}$,
for which,
\[
w_n=\frac{\gamma+x_n}{y_n}, \;\; n=0,1,\ldots,
\]
satisfies
\begin{equation}\label{w}
x_{n+1}=\frac{x_n w_n}{x_n+\gamma}\;\; \text{and} \;\;
w_{n+1}=\frac{w_n}{x_n+\gamma}, \;\; n=0,1,\ldots\;.
\end{equation}
This  is also true for System (\ref{general}) with the variable
coefficient $\gamma_n$. That is, given a solution
$\{x_n,y_n\}_{n=0}^{\infty}$ of System (\ref{general}), the
sequence  $\{x_n,w_n\}_{n=0}^{\infty}$, where
\[
w_n=\frac{\gamma_{n-1}+x_n}{y_n}, \;\; n=0,2,\ldots,
\]
with $\gamma_{-1}=\gamma_0$, satisfies the system
\begin{equation}\label{wgeneral}
x_{n+1}=\frac{x_n w_n}{x_n+\gamma_{n-1}}\;\; \text{and} \;\;
w_{n+1}=\frac{w_n}{x_n+\gamma_{n-1}}, \;\; n=0,1,\ldots\;.
\end{equation}
Furthermore,
\begin{equation}\label{wgeneral2}
w_{n+1}=\frac{1}{y_n}, \;\; \text{for all} \;\; n \geq 0.
\end{equation}

The following definitions and theorems for double sequences will
be useful in the sequel. Assume that
$\{\phi(k,n)\}_{k,n=1}^{\infty}$, is a double sequence of positive
real numbers.
 Then we say that $\phi(k,n)$ converges to $L \in [0,\infty)$,
if for every $\epsilon>0$, there exists $N(\epsilon)$ such that
\[
\vert \phi(k,n)-L\vert < \epsilon, \;\; \text{for all} \;\; k,n
\geq N.
\]
We write
\[
\lim_{k,n \rightarrow \infty} \phi(k,n)=L,
\]
and $L$ is called the double limit of the sequence. The two limits
\[
\lim_{k \rightarrow \infty} \lim_{n \rightarrow \infty} \phi(k,n)
\;\; \text{and} \;\; \lim_{n \rightarrow \infty} \lim_{k
\rightarrow \infty} \phi(k,n)
\]
are called iterated limits.

\noindent Assume that $\{\phi(k,n)\}$ is a double sequence of
positive real numbers and
\[
(k_1,n_1)<(k_2,n_2)<\ldots<(k_s,n_s)<\ldots
\]
is a strictly increasing sequence of pairs of positive integers.
Then $\{\phi(k_s,n_t)\}$ is a double subsequence of
$\{\phi(k,n)\}$.

The following three theorems will be useful in the sequel. For the
proof see \cite{Habil}.

\begin{theorem} Assume that $\{\phi(k,n)\}_{k,n=1}^{\infty}$ is a double sequence of positive real numbers which is bounded from above by a positive constant.
Also, assume that for each $k \geq 1$
\[
\lim_{n \rightarrow \infty} \phi(k,n)=w_k \;\; \text{exists}.
\]
Then for any subsequence $\{\phi(k_s,n_t)\}$ of $\{\phi(k,n)\}$,
\[
\lim_{t \rightarrow \infty} \phi(k_s,n_t)=w_{k_s} \;\;
\text{exists}\;\; \text{for all} \;\; s.
\]
Furthermore, if
\[
\lim_{k \rightarrow \infty} \lim_{n \rightarrow \infty}
\phi(k,n)=L \;\; \text{exists},
\]
then for any subsequence $\{\phi(k_s,n_t)\}$ of $\{\phi(k,n)\}$,
\[
\lim_{s \rightarrow \infty} \lim_{t \rightarrow \infty}
\phi(k_s,n_t)=L.
\]

\end{theorem}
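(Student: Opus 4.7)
The plan is to reduce both claims to the classical one-variable fact that any subsequence of a convergent real sequence inherits its limit. For the first assertion, fix $s$ and look at the ordinary sequence $t \mapsto \phi(k_s,n_t)$. Since the pair-subsequence hypothesis $(k_1,n_1)<(k_2,n_2)<\cdots$ forces the second coordinates $\{n_t\}$ to be a strictly increasing sequence of positive integers, $\{\phi(k_s,n_t)\}_{t\geq 1}$ is, with $k_s$ held constant, a genuine single-index subsequence of $\{\phi(k_s,n)\}_{n\geq 1}$. By hypothesis the latter converges to $w_{k_s}$, so the subsequence does as well, giving $\lim_{t\to\infty}\phi(k_s,n_t)=w_{k_s}$ for every $s$.

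For the second assertion, I would use the first part to rewrite the iterated limit as $\lim_{s\to\infty}\lim_{t\to\infty}\phi(k_s,n_t)=\lim_{s\to\infty}w_{k_s}$. Since $\{k_s\}$ is a strictly increasing sequence of positive integers we have $k_s\to\infty$, and the assumed existence of $\lim_{k\to\infty}w_k=L$ combined once more with the single-variable subsequence principle yields $\lim_{s\to\infty}w_{k_s}=L$, which is what is required.

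The only real obstacle is notational: one has to be careful that a ``double subsequence'' of $\{\phi(k,n)\}$ is indexed by a strictly increasing sequence of pairs, so that its first and second coordinates each form strictly increasing subsequences of $\mathbb{N}$; this is what makes the two single-variable extractions above legitimate. Note that the boundedness assumption on $\phi$ is not actively invoked in either step, it serves only to ensure in the background that the limits $w_k$ and $L$ are finite elements of $[0,\infty)$ rather than $+\infty$, so that the statement of the theorem is meaningful.
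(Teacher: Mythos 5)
The paper offers no proof of this statement at all --- it is one of three facts about double sequences quoted from \cite{Habil} --- so there is nothing internal to compare your argument against. Taken on its own terms, your proof is correct and is the expected one: for fixed $s$ the map $t\mapsto\phi(k_s,n_t)$ is a single-index subsequence of the convergent sequence $\{\phi(k_s,n)\}_{n\geq 1}$, and the iterated limit in the second claim collapses to $\lim_{s\to\infty}w_{k_s}=L$ because $k_s\to\infty$; your remark that the boundedness hypothesis is never actually invoked is also accurate. The one point deserving more weight than you give it is the caveat you raise at the end. Everything hinges on the coordinate sequences $\{k_s\}$ and $\{n_t\}$ being strictly increasing, hence tending to infinity. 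The paper's definition of a double subsequence only requires the pairs $(k_1,n_1)<(k_2,n_2)<\cdots$ to be strictly increasing without saying which order on pairs is meant, and its later use of the notion (in the proof of Lemma 2.6 it explicitly entertains a subsequence with $k_{i_s}$ ``finite and fixed'') suggests an order under which one coordinate may stagnate. Under such a reading the second conclusion is simply false: take $k_s$ constant equal to $k_1$, so that $\lim_{s\to\infty}\lim_{t\to\infty}\phi(k_s,n_t)=w_{k_1}$, which need not equal $L$. So the reading you adopt is the only one under which the theorem is true; that is a defect of the statement as transcribed, not a gap in your proof.
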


\begin{theorem} Assume that $\{\phi(k,n)\}_{k,n=1}^{\infty}$ is a double sequence of positive real numbers, which is
bounded from above by a positive constant. Also, assume that
$\{\phi(k_s,n_t)\}$ is a double subsequence of $\{\phi(k,n)\}$
which strictly decreases (resp. increases) to a nonnegative value
$L$ and also
\[
\phi(k_s,n_t)<\phi(i,j), \; (\text{resp.}\phi(k_s,n_t)>\phi(i,j))
\; \text{for all} \;\; (i,j)<(k_s,n_t)
\]
and for all $(k_s,n_t)$. Then
\[
\lim_{s,t \rightarrow \infty} \phi(k_s,n_t)=\lim_{s \rightarrow
\infty} \lim_{t \rightarrow \infty} \phi(k_s,n_t)= \lim_{t
\rightarrow \infty} \lim_{s \rightarrow \infty} \phi(k_s,n_t)=L
\in [0,\infty).
\]

\end{theorem}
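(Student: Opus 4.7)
The plan is to use the strong pointwise inequality $\phi(k_s,n_t) < \phi(i,j)$ for $(i,j)<(k_s,n_t)$ to reduce the problem to standard monotone one-dimensional arguments. Writing $\phi_{s,t}:=\phi(k_s,n_t)$, I would first observe that specializing the hypothesis to earlier members of the subsequence itself gives strict monotonicity in each index: for $s<s'$, the pair $(k_s,n_t)<(k_{s'},n_t)$ in the product order, so $\phi_{s',t}<\phi_{s,t}$ for every $t$, and analogously for columns. Thus each row $t\mapsto \phi_{s,t}$ is strictly decreasing and bounded below by $L$, so the row limit $w_s:=\lim_{t\to\infty}\phi_{s,t}\ge L$ exists; by symmetry, $u_t:=\lim_{s\to\infty}\phi_{s,t}\ge L$ exists as well.

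Next I would show the sequence $\{w_s\}$ is itself decreasing by passing to the $t$-limit in the inequality $\phi_{s',t}\le\phi_{s,t}$, giving $w_{s'}\le w_s$ whenever $s<s'$. Hence $W:=\lim_{s\to\infty}w_s\ge L$ exists. To identify $W$ with $L$ I would argue by contradiction: if $W>L$, then $w_s\ge W$ for all $s$ (as $\{w_s\}$ decreases to $W$), and each $\phi_{s,t}\ge w_s\ge W$ (since the row decreases to $w_s$), so $\phi_{s,t}\ge W>L$ for every $(s,t)$. This contradicts the assumption that $\phi_{s,t}$ strictly decreases to $L$, since some entries must fall below $W$. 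Hence $W=L$, and the same argument with the two indices exchanged yields $\lim_t u_t = L$. The double limit is then pinned down by a monotonicity sandwich: given $\varepsilon>0$ and $(s_0,t_0)$ with $\phi_{s_0,t_0}<L+\varepsilon$, one has $L\le\phi_{s,t}\le\phi_{s_0,t_0}<L+\varepsilon$ for all $(s,t)$ with $s\ge s_0$ and $t\ge t_0$.

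The principal obstacle is the passage from the ambient pointwise inequality to a clean monotonicity statement for the one-dimensional auxiliary sequences $\{w_s\}$ and $\{u_t\}$; this is where the full strength of the second hypothesis (the inequality against \emph{all} earlier pairs, not just subsequence elements) is used implicitly to keep the rows and columns above $L$. Once the reduction is in place, the rest is a routine $\varepsilon$-argument together with the contradiction above, so no further technical difficulty should arise.
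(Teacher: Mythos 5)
Your proof is essentially correct, but there is nothing in the paper to compare it against: the author states Theorems 1.1--1.3 without proof and defers to the reference [Habil, \emph{Double sequences and double series}], so your argument supplies a self-contained proof where the paper gives none. Your route is the natural one: extract monotonicity of each row $t\mapsto\phi(k_s,n_t)$ and each column from the hypothesis $\phi(k_s,n_t)<\phi(i,j)$ for $(i,j)<(k_s,n_t)$, obtain the inner limits $w_s=\inf_t\phi(k_s,n_t)$ and $u_t=\inf_s\phi(k_s,n_t)$, identify $\lim_s w_s=\inf_s\inf_t\phi(k_s,n_t)=\inf_{s,t}\phi(k_s,n_t)=L$ (your contradiction argument is a slightly longer but valid way of saying exactly this), and then get the Pringsheim double limit from the two-sided bound $L\le\phi(k_s,n_t)\le\phi(k_{s_0},n_{t_0})<L+\varepsilon$ for $s\ge s_0$, $t\ge t_0$. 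One point deserves to be made explicit rather than left implicit: your very first step, $(k_s,n_t)<(k_{s'},n_t)$ for $s<s'$, requires that the partial order on index pairs be the componentwise order in which equality in one coordinate is permitted. The paper never defines the order, and under the alternative reading (strict increase in \emph{both} coordinates) the row $\{\phi(k_1,n_t)\}_t$ is subject to no constraint at all and the inner limits need not exist, so the theorem would be false as stated; your interpretation is the only one under which the conclusion can hold, and it is the one consistent with how the theorem is invoked in the proof of Lemma 2.6(4). Two cosmetic remarks: the boundedness-from-above hypothesis is genuinely needed only in the ``increases'' variant (to guarantee the monotone limits are finite), which you dispatch by symmetry, and your final sandwich already yields all three limits at once, making the separate contradiction step for $W=L$ dispensable.
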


\begin{theorem} Assume that $\{\phi(k,n)\}_{k,n=1}^{\infty}$ is a double sequence of positive real numbers such
that
\[
\lim_{n \rightarrow \infty} \phi(k,n) \;\; \text{exists uniformly}
\;\; \text{in} \;\; k
\]
and that
\[
\lim_{k \rightarrow \infty}  \lim_{n \rightarrow \infty}
\phi(k,n)=L.
\]
Then the double limit of the sequence $\{\phi(k,n)\}$ exists and
\[
\lim_{k,n \rightarrow \infty} \phi(k,n)=L.
\]
\end{theorem}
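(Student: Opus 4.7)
The plan is a clean $\epsilon/2$ argument based on the triangle inequality. For each $k\geq 1$ write $w_k := \lim_{n\to\infty} \phi(k,n)$, which exists by hypothesis, and note that $w_k \to L$ as $k\to\infty$ is precisely the second assumption. The goal is, for an arbitrary $\epsilon > 0$, to produce a single threshold $N$ beyond which $|\phi(k,n) - L| < \epsilon$ for all $k,n \geq N$ simultaneously, which is the definition of the double limit given earlier in the paper.

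The essential ingredient is the uniformity hypothesis. First I would use uniform convergence to produce $N_1$ such that $|\phi(k,n) - w_k| < \epsilon/2$ for every $n \geq N_1$ and every $k$; the quantifier order here is crucial, and is exactly what uniformity in $k$ buys. Second, the convergence $w_k \to L$ gives an $N_2$ with $|w_k - L| < \epsilon/2$ for all $k\geq N_2$. Setting $N = \max\{N_1,N_2\}$ and splitting
\[
|\phi(k,n) - L| \leq |\phi(k,n) - w_k| + |w_k - L|
\]
finishes the argument, since each term on the right is bounded by $\epsilon/2$ whenever $k,n \geq N$.

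The only conceptual point worth flagging is that uniformity is indispensable. Without it, the threshold controlling $|\phi(k,n) - w_k|$ would depend on $k$ through some $N_1(k)$, and one could not combine it with the tail $|w_k - L|$ to produce a single $N$; indeed, in that generality the double limit need not equal the iterated limit at all. So there is no real obstacle here, merely bookkeeping: I expect the formal write-up to occupy only a few lines once the two thresholds are named.
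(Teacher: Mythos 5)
Your argument is correct: the $\epsilon/2$ decomposition $|\phi(k,n)-L|\leq|\phi(k,n)-w_k|+|w_k-L|$, with the first term controlled uniformly in $k$ by the uniformity hypothesis and the second by $w_k\to L$, yields a single threshold $N$ and hence the double limit as defined in the paper. Note that the paper does not prove this theorem itself but defers to the cited reference of Habil on double sequences; your proof is the standard Moore--Osgood-type argument one would find there, and you correctly identify uniformity as the indispensable hypothesis.
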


\section{Boundedness}

In this section we establish that both components of every
solution of System (\ref{general}) are bounded from above by a
positive constant.

\begin{theorem} Let $\{x_n,y_n\}_{n=0}^{\infty}$ be a solution of
System (\ref{general}) with positive initial conditions $x_0$ and
$y_0$ and such that
\[
0<\gamma^{\prime}\leq \gamma_n\leq \gamma<1, \;\; \text{for all}
\;\; n \geq 0
\]
and $\gamma^{\prime},\gamma \in (0,1)$. Then both components of
the solution $\{x_n,y_n\}_{n=0}^{\infty}$ are bounded from above
by a positive constant.
\end{theorem}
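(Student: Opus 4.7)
I would adapt the double-sequence argument of \cite{CAM} to the variable-coefficient system, in three main steps: reduce to boundedness of $\{x_n\}$, build a convergent series representation of $w_n$, and then derive a contradiction from the assumption that $\{x_n\}$ is unbounded using Theorems 1--3.

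\emph{Setup and identity.} If $x_n\le M$ for all $n$, then from $y_{n+1}=x_n+\gamma_n y_n$ and $\gamma_n\le\gamma<1$ a routine iteration gives $y_n\le M/(1-\gamma)+y_0$, so it suffices to prove $\{x_n\}$ is bounded above. Combining the definition of $w_n$ with (\ref{wgeneral2}), one obtains the recursion $w_n=x_{n+1}+\gamma_{n-1}w_{n+1}$, and iterating backward yields
\[
w_i=\phi(i,n)+c_{i,n+1}\,w_{i+n+1},\qquad \phi(i,n):=\sum_{k=0}^{n}c_{i,k}\,x_{i+k+1},
\]
where $c_{i,0}=1$ and $c_{i,k}=\gamma_{i-1}\gamma_i\cdots\gamma_{i+k-2}$ for $k\ge1$. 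In particular $\phi(i,\cdot)$ is strictly increasing in $n$ and bounded above by $w_i$, so $\phi(i,\infty):=\lim_{n\to\infty}\phi(i,n)$ exists and lies in $(0,w_i]$.

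\emph{Convergence of the series.} I would show $\phi(i,\infty)=w_i$, equivalently that the tail $c_{i,n+1}w_{i+n+1}\to0$. If this failed at some $i_0$, then since $c_{i_0,n+1}\le\gamma^{n+1}\to0$, the identity would force $w_{i_0+n+1}\to\infty$, hence $y_n=1/w_{n+1}\to0$. Feeding $y_n\to0$ into $y_{n+1}=x_n+\gamma_n y_n$ gives $x_n\to0$, and yet $x_{n+1}/x_n=1/y_n\to\infty$ contradicts $x_n\to 0$. Thus $w_i=\sum_{k=0}^{\infty}c_{i,k}\,x_{i+k+1}$ for every $i$.

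\emph{Unboundedness leads to contradiction.} Assume $\{x_n\}$ is unbounded and pick a strictly increasing sequence $\{n_k\}$ with $x_{n_k}\to\infty$ and $x_{n_k}>x_j$ for all $j<n_k$. For each fixed $m\ge1$,
\[
w_{n_k-m}\ge c_{n_k-m,m-1}\,x_{n_k}\ge(\gamma')^{m-1}x_{n_k}\longrightarrow\infty,
\]
so $y_{n_k-m-1}\to 0$ and, propagating through $y_{n+1}=x_n+\gamma_n y_n$, also $x_{n_k-j}\to 0$ for every fixed $j\ge 2$. I would then apply the double-sequence machinery to the array $\phi(n_s-m,n_t)$: Theorem 1 identifies $\lim_t\phi(n_s-m,n_t)=w_{n_s-m}$; the extremality of $\{n_k\}$ supplies the monotone structure required by Theorem 2 on an appropriate subsequence; and the uniform geometric bound $c_{i,k}\le\gamma^k$ allows Theorem 3 to upgrade iterated limits to a double limit. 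Computing this double limit in the two natural orders then yields two inconsistent values---one divergent (forced by $x_{n_k}\to\infty$), the other finite (forced by $x_{n_k-j}\to 0$)---producing the contradiction. The hardest step is exactly this last one; without the lower bound $\gamma'>0$, the weights $c_{i,k}$ could degenerate along the chosen subsequence and the monotonicity and uniformity hypotheses of Theorems 2 and 3 would fail.
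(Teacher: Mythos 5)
Your setup is sound and matches the paper's preliminary lemmas: the reduction to boundedness of $\{x_n\}$, the telescoped identity $w_i=\phi(i,n)+\mu(i,n+1)w_{i+n}$, and the proof that the tail vanishes (your argument for the latter is a clean alternative to the paper's Lemma 2.3). The gap is in the final step, which you yourself flag as the hardest and leave unexecuted — and as sketched it does not work. Having chosen record indices with $x_{n_k}\to\infty$, you get $w_{n_k-m}\ge(\gamma')^{m-1}x_{n_k}\to\infty$ for each fixed $m$, so the array $\phi(n_s-m,n_t)$ you propose to analyze has $\lim_t\phi(n_s-m,n_t)=w_{n_s-m}\to\infty$ in $s$: it is \emph{not} bounded above, so the hypotheses of Theorems 1.1 and 1.2 (and the finite iterated limit needed for Theorem 1.3) all fail. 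Worse, there is no inconsistency between the two iterated limits: for fixed $t$ with $n_t\ge m-1$ the sum $\phi(n_s-m,n_t)$ already contains the term $\mu(n_s-m,m-1)x_{n_s}\ge(\gamma')^{m-1}x_{n_s}\to\infty$, so \emph{both} orders of iteration give $+\infty$. The configuration $x_{n_k}\to\infty$, $x_{n_k-j}\to0$, $y_{n_k-j}\to0$ is not self-contradictory on its face (take $x\approx\epsilon$, $y\approx\epsilon^2$ two steps back and $x_{n_k}\approx\epsilon^{-2}$), so some additional structural input is required, and your plan does not identify it. (A smaller slip: $x_{n_k-2}\to0$ does not follow from what you established; you only get $y_{n_k-j}\to0$ for $j\ge2$ and hence $x_{n_k-j}\to0$ for $j\ge3$.)

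The missing idea, which is the heart of the paper's proof, is to arrange for the relevant double sequence to be \emph{bounded}. The paper proves the stronger statement that $\inf_n y_n>0$: assuming $y_{n_i+1}\to0$, it selects the last indices $k_i\le n_i$ at which $y$ crosses below $1$, so that $y_{k_i}\in[\gamma',1)$ is trapped in a compact interval bounded away from $0$; after passing to a subsequence, $w_{k_i}=(\gamma_{k_i-1}+x_{k_i})/y_{k_i}\to M\in(0,\infty)$ is \emph{finite and positive}, the double sequence $\phi(k_i,n)$ is bounded above by $\sup_i w_{k_i}$, and Theorems 1.1 and 1.2 apply legitimately (Lemma \ref{ds}). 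The contradiction is then $\liminf_i x_{k_i+1}>0$ versus $x_{k_i+1}=x_{k_i}/y_{k_i}\to0$, and boundedness of $\{x_n\}$ follows afterwards from $x_{n+1}=\frac{1}{y_{n-1}}\cdot\frac{x_n}{x_n+\gamma_{n-1}}$. Without this device for normalizing $w$ along the chosen subsequence, the double-sequence machinery you invoke has nothing bounded to act on.
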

\noindent The proof of the theorem will be presented at the end of
this section.

\noindent Set $\gamma_{-1}=\gamma_0$. Consider the double sequence
of finite sums
\[
\phi(i,n)=x_{i+1}+\gamma_{i-1} x_{i+2}+\gamma_{i-1}\gamma_{i}
x_{i+3}+\ldots+\gamma_{i-1}\cdots\gamma_{i+n-3} x_{i+n},
\]
with
\[
i=0,1,\ldots \; \; \text{and} \;\; n=1,2,\ldots,
\]
or equivalently,
\begin{equation}\label{phidef}
\phi(i,n)=\sum_{k=0}^{n-1} \mu(i,k) x_{i+k+1},\;\; i=0,1,\ldots,
\;n=1,2,\ldots,
\end{equation}
where for each $i \geq 0$,
\[
\mu(i,k)=\prod_{j=i-1}^{k+i-3}\gamma_j, \;\;k=2,3,\ldots
\]
and
\[
\mu(i,1)=1.
\]
\noindent The following lemmas will be useful in the sequel.
\begin{lemma} It holds that
\[
\lim_{i \rightarrow \infty} \lim_{k \rightarrow
\infty}\mu(i,k)=\lim_{i,k \rightarrow \infty}\mu(i,k)=0.
\]
\end{lemma}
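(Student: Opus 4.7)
The plan is to exploit the uniform upper bound $\gamma_n \leq \gamma < 1$ to majorize $\mu(i,k)$ by a geometric sequence in $k$ that is completely independent of $i$.

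First I would count terms in the product defining $\mu(i,k)$. For $k \geq 2$, the product runs from $j=i-1$ to $j=k+i-3$, which is exactly $k-1$ factors, each at most $\gamma$; and $\mu(i,1)=1=\gamma^{0}$. So for every $i\geq 0$ and every $k\geq 1$,
\[
0<\mu(i,k)\leq \gamma^{\,k-1}.
\]
Since $0<\gamma<1$, this one inequality does essentially all the work.

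For the iterated limit, fixing $i$ and letting $k\to\infty$ gives $\lim_{k\to\infty}\mu(i,k)=0$, and then letting $i\to\infty$ gives
\[
\lim_{i\to\infty}\lim_{k\to\infty}\mu(i,k)=0.
\]
For the double limit, given $\varepsilon>0$, I would choose $N$ so that $\gamma^{N-1}<\varepsilon$; then for all $i,k\geq N$ (in fact for every $i\geq 0$ as long as $k\geq N$) the bound above yields $\mu(i,k)\leq \gamma^{k-1}\leq \gamma^{N-1}<\varepsilon$, so $\lim_{i,k\to\infty}\mu(i,k)=0$.

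There is no real obstacle here: the lemma is a direct consequence of $\gamma<1$, and the geometric majorant makes the inner limit converge uniformly in $i$, which is what forces the double limit and the iterated limit to coincide (and also matches the hypotheses of Theorem~1.3 applied to $\mu(i,k)$, should one wish to invoke it explicitly rather than argue from the $\varepsilon$-definition). I would present the $\varepsilon$-$N$ argument directly since it is shorter than citing Theorem~1.3.
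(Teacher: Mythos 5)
Your proof is correct and rests on exactly the same key bound as the paper's, namely that the product defining $\mu(i,k)$ has $k-1$ factors each at most $\gamma$, so $\mu(i,k)\leq\gamma^{k-1}$ uniformly in $i$. The paper invokes Theorem~1.3 to pass from this uniform convergence to the double limit, whereas you also spell out the direct $\varepsilon$--$N$ argument; this is a presentational difference only.
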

\begin{proof}In view of Theorem 1.3, it suffices to show that
\[
\lim_{k \rightarrow \infty}\mu(i,k)=0
\]
uniformly for each $i$. Indeed, for a given positive number
$\epsilon$ and $i$ arbitrary but fixed, we choose $k> \frac{\ln
\epsilon}{\ln \gamma}+1$, or equivalently $\gamma^{k-1}<\epsilon$.
Then
\[
\mu(i,k)<\gamma^{k-1}<\epsilon
\]
from which the result follows.
\end{proof}

\begin{lemma} Let $\{x_n,y_n\}_{n=0}^{\infty}$ be a solution of
System (\ref{general}). Then for each $i \geq 0$,
\[
\lim_{n \rightarrow \infty} \frac{y_{i+n}}{\mu(i,n+2)}=\infty.
\]
\end{lemma}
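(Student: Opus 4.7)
My plan is to first reduce the claim to the divergence of a certain series in $x_n$, and then verify that divergence by a contradiction argument that uses both dynamical equations.

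First, I rewrite the $y$-recurrence multiplicatively. From $x_{n+1}=x_n/y_n$ we have $x_n = y_n x_{n+1}$, so $y_{n+1} = x_n + \gamma_n y_n = y_n(x_{n+1} + \gamma_n)$. Iterating from $y_i$ gives the closed form
\[
y_{i+n} = y_i \prod_{k=1}^{n}(x_{i+k} + \gamma_{i+k-1}).
\]
The definition of $\mu$ simplifies to $\mu(i,n+2) = \gamma_{i-1}\gamma_i \cdots \gamma_{i+n-1} = \gamma_{i-1}\prod_{k=1}^{n}\gamma_{i+k-1}$, so dividing yields
\[
\frac{y_{i+n}}{\mu(i,n+2)} = \frac{y_i}{\gamma_{i-1}}\prod_{k=1}^{n}\left(1 + \frac{x_{i+k}}{\gamma_{i+k-1}}\right).
\]
Using $\prod_{k=1}^{n}(1+a_k)\geq 1+\sum_{k=1}^{n}a_k$ for $a_k\geq 0$, the right-hand side tends to $+\infty$ provided $\sum_{k=1}^{\infty}x_{i+k}/\gamma_{i+k-1}=\infty$, and by the uniform bounds $\gamma_n\leq\gamma<1$ this is a consequence of $\sum_{n=0}^{\infty}x_n=\infty$.

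The remaining and main task is therefore to prove $\sum_{n=0}^{\infty}x_n = \infty$, and this is where I expect the work to lie. I argue by contradiction: assume $\sum x_n < \infty$, so in particular $x_n\to 0$. Applying the product formula at $i=0$ gives $y_n = y_0 \prod_{k=1}^{n}(x_k + \gamma_{k-1})$, and using $x_k+\gamma_{k-1}\leq x_k+\gamma$ together with $\log(x_k+\gamma)\leq \log\gamma + x_k/\gamma$, I obtain
\[
\log\frac{y_n}{y_0}=\sum_{k=1}^{n}\log(x_k+\gamma_{k-1}) \leq n\log\gamma + \frac{1}{\gamma}\sum_{k=1}^{n}x_k \longrightarrow -\infty,
\]
since $\log\gamma<0$ and $\sum x_k$ is assumed finite. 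Hence $y_n\to 0$, which forces $x_{n+1}/x_n = 1/y_n\to\infty$; in particular $x_{n+1} > 2x_n$ for all sufficiently large $n$, producing $x_n\to\infty$ and contradicting $x_n\to 0$. Therefore $\sum x_n = \infty$, and the product identity above gives the lemma.
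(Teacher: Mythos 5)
Your argument is correct, and it reaches the lemma by a genuinely different decomposition than the paper's. The paper never writes the closed-form product: instead it divides the recurrence $y_{i+n+1}=x_{i+n}+\gamma_{i+n}y_{i+n}$ by $\mu(i,n+3)$ to see that the quantity $y_{i+n}/\mu(i,n+2)$ is strictly increasing, assumes for contradiction that it converges to a finite $L$, deduces $y_{i+n}\leq(L+\epsilon)\gamma^{n+1}$ and hence $x_{i+n+1}\geq x_{i+n}/\bigl((L+\epsilon)\gamma^{n+1}\bigr)\to\infty$, and closes the contradiction with the inequality $y_{i+n+1}>x_{i+n}$ against $y_{i+n}\to 0$. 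You instead solve the recurrence exactly, $y_{i+n}=y_i\prod_{k=1}^{n}(x_{i+k}+\gamma_{i+k-1})$, recognize the ratio as $\frac{y_i}{\gamma_{i-1}}\prod_{k=1}^{n}\bigl(1+x_{i+k}/\gamma_{i+k-1}\bigr)$, and reduce the lemma to the single clean statement $\sum_n x_n=\infty$, which you then prove using essentially the same dynamical tension the paper exploits (geometric decay of $y_n$ forces $x_{n+1}/x_n=1/y_n\to\infty$), but closing the contradiction against $x_n\to 0$ rather than against $y_{n+1}>x_n$. The two proofs share their engine — small $y$ makes $x$ explode — but yours isolates the divergence of $\sum x_n$ as a quotable intermediate fact and makes the product structure of the ratio explicit via the logarithm estimate, while the paper's monotonicity argument avoids both. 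Each step of your reduction checks out (the identity $\mu(i,n+2)=\gamma_{i-1}\prod_{k=1}^{n}\gamma_{i+k-1}$ matches the paper's own computation, and $\prod(1+a_k)\geq 1+\sum a_k$ is all you need), and like the paper you use only the upper bound $\gamma_n\leq\gamma<1$, not the lower bound $\gamma'$.
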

\begin{proof} From the first equation of System (\ref{general}) we
see that
\[
\frac{y_{i+n+1}}{\mu(i,n+3)}=\frac{x_{i+n}}{\mu(i,n+3)}+\frac{y_{i+n}}{\mu(i,n+2)},\;\;n=0,1,\ldots
\]
and so the sequence
$\left\{\frac{y_{i+n}}{\mu(i,n+2)}\right\}_{n=0}^{\infty}$ is
strictly increasing. Now assume for the sake of contradiction that
\[
\lim_{n \rightarrow \infty} \frac{y_{i+n}}{\mu(i,n+2)}=L \in
(0,\infty).
\]
Then, there exists a positive number $\epsilon$ arbitrarily small
and a positive integer $N$ sufficiently large such that
\[
y_{n+i}<(L+\epsilon)\mu(i,n+2), \;\; \text{for all} \;\; n \geq N.
\]
From Lemma 2.2, the sequence $\{\mu(i,n+2)\}_{n=0}^{\infty}$
converges to zero. Thus, the sequence $\{y_{i+n}\}_{n=0}^{\infty}$
goes to zero as well. Furthermore,
\[
\mu(i,n+2)=\prod_{j=i-1}^{n+i-1}\gamma_j\leq\gamma^{n+1}, \;\;
\text{for all} \;\; n \geq 0
\]
implies that
\[
y_{i+n}\leq(L+\epsilon)\gamma^{n+1}, \;\; \text{for all} \;\; n
\geq N
\]
and so
\[
x_{i+n+1}=\frac{x_{i+n}}{y_{i+n}}\geq\frac{1}{(L+\epsilon)\gamma^{n+1}}
\cdot x_{i+n}, \;\; \text{for all} \;\; n \geq N
\]
from which it follows that
\[
\lim_{n \rightarrow \infty} x_{i+n+1}=\infty.
\]
However, from the second equation
\[
y_{i+n+1}>x_{i+n}, \;\; \text{for all} \;\; n \geq 0
\]
which contradicts the fact that the sequence
$\{y_{i+n}\}_{n=0}^{\infty}$ converges to 0.

\end{proof}

\begin{lemma} Let $\{x_n,y_n\}_{n=0}^{\infty}$ be a solution of
(\ref{general}). Then for all $i\geq 0$,
\begin{equation}\label{basic}
\frac{x_{i}+\gamma_{i-1}}{y_i}=w_i=\phi(i,n)+\mu(i,n+1) w_{i+n},
\;\; n=1,2,\ldots\;.
\end{equation}

\end{lemma}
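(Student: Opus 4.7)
The plan is to prove the identity by induction on $n\ge 1$, after first extracting the underlying one-step recurrence that drives the whole telescoping. Specifically, I claim that for every $j\ge 0$,
$$w_j = x_{j+1} + \gamma_{j-1}\, w_{j+1}.$$
This is an immediate consequence of the definitions and of (\ref{wgeneral2}): by definition $w_j=(\gamma_{j-1}+x_j)/y_j$; the first equation of System (\ref{general}) gives $x_{j+1}=x_j/y_j$; and (\ref{wgeneral2}) gives $w_{j+1}=1/y_j$. Adding the last two yields $(x_j+\gamma_{j-1})/y_j=w_j$, as desired. This tiny identity will play the role of the ``telescoping step,'' exactly as in \cite{CAM}, and all index-bookkeeping for $\mu$ and $\phi$ is built around it.

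For the base case $n=1$, the asserted equality reduces to $w_i=\phi(i,1)+\mu(i,2)\, w_{i+1}$. Since the first term of $\phi(i,n)$ is $x_{i+1}$ (read off the expansion in (\ref{phidef})) and $\mu(i,2)=\gamma_{i-1}$ (the single-factor product), the right-hand side is precisely $x_{i+1}+\gamma_{i-1}w_{i+1}$, which equals $w_i$ by the one-step recurrence with $j=i$.

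For the inductive step, assume $w_i=\phi(i,n)+\mu(i,n+1)\, w_{i+n}$ and apply the one-step recurrence with $j=i+n$ to rewrite $w_{i+n}=x_{i+n+1}+\gamma_{i+n-1}\,w_{i+n+1}$. Substituting,
$$w_i=\phi(i,n)+\mu(i,n+1)\,x_{i+n+1}+\mu(i,n+1)\,\gamma_{i+n-1}\,w_{i+n+1}.$$
The first two summands combine into $\phi(i,n+1)$, since $\mu(i,n+1)\,x_{i+n+1}=\gamma_{i-1}\gamma_i\cdots\gamma_{i+n-2}\,x_{i+n+1}$ is exactly the $(n+1)$-st term appended to $\phi(i,n)$ to form $\phi(i,n+1)$. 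The coefficient of $w_{i+n+1}$ simplifies using $\mu(i,n+1)\,\gamma_{i+n-1}=\mu(i,n+2)$, which is just the definition of $\mu(i,\cdot)$ with one more factor. This yields the identity at step $n+1$ and closes the induction.

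The only real obstacle is clerical rather than conceptual: one must keep the shift between the index of $\mu$ and the subscript of the corresponding $x$ or $\gamma$ consistent throughout (in particular the convention $\mu(i,1)=1$ coming from an empty product, and the telescoping relation $\mu(i,k+1)=\gamma_{i+k-2}\mu(i,k)$). Once those are fixed, no analytic input is needed beyond the definitions of $w_n$, System (\ref{general}), and the identity (\ref{wgeneral2}).
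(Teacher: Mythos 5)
Your proof is correct and follows essentially the same route as the paper: an induction on $n$ driven by the one-step identity $w_j=x_{j+1}+\gamma_{j-1}w_{j+1}$, with the same bookkeeping $\mu(i,n+1)\gamma_{i+n-1}=\mu(i,n+2)$. The only (immaterial) difference is that you derive the one-step identity directly from the definition of $w_j$ and (\ref{wgeneral2}), whereas the paper reads it off from System (\ref{wgeneral}).
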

\begin{proof} Let $i \geq 0$ be given. Clearly, in view of
(\ref{wgeneral}),
\[
w_i=x_{i+1}+\gamma_{i-1} w_{i+1}
\]
and so the result is true when $n=1$. Assume that $k >1$ and that
\[
w_i=x_{i+1}+\gamma_{i-1}
x_{i+2}+\ldots+\gamma_{i-1}\cdots\gamma_{i+k-3}
x_{i+k}+\gamma_{i-1}\cdots\gamma_{i+k-2} w_{i+k}
\]
\[
=\phi(i,k)+\mu(i,k+1) w_{k+i}.
\]
Then
\[
w_i=\phi(i,k)+\mu(i,k+1)(x_{i+k+1}+\gamma_{i+k-1} w_{i+k+1})
\]
\[
=\phi(i,k)+\mu(i,k+1)x_{i+k+1}+\mu(i,k+1)\gamma_{i+k-1}w_{i+k+1}
\]
\[
= \phi(i,k+1)+\mu(i,k+2)w_{i+k+1}.
\]
The proof is complete.

\end{proof}

\begin{lemma} Let $\{x_n,y_n\}_{n=0}^{\infty}$ be a solution of
(\ref{general}). Then for all $i\geq 0$,
\begin{equation}\label{basic2}
\lim_{n\rightarrow\infty} \phi(i,n)=\sum_{k=0}^{\infty} \mu(i,k)
x_{i+k+1}=\frac{x_{i}+\gamma_{i-1}}{y_i}=w_i.
\end{equation}
\end{lemma}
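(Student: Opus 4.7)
The plan is to start from the telescoping identity established in Lemma 2.5, namely
\[
w_i = \phi(i,n) + \mu(i,n+1)\, w_{i+n}, \qquad n = 1,2,\ldots,
\]
so that proving convergence of $\phi(i,n)$ to $w_i$ reduces to showing that the tail
\[
R_n := \mu(i,n+1)\, w_{i+n}
\]
tends to zero as $n \to \infty$, with $i \geq 0$ fixed.

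First I would replace $w_{i+n}$ by something more tractable. Identity (\ref{wgeneral2}) gives $w_{m+1} = 1/y_m$ for every $m \geq 0$, so for $n \geq 1$ (which is the range in Lemma 2.5) one has $w_{i+n} = 1/y_{i+n-1}$. Substituting yields
\[
R_n = \frac{\mu(i,n+1)}{y_{i+n-1}}.
\]
So the problem is reduced to showing $\mu(i,n+1)/y_{i+n-1} \to 0$.

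This is exactly (after a shift of the index by one) the content of Lemma 2.4, which asserts that for each fixed $i$,
\[
\lim_{n \to \infty} \frac{y_{i+n}}{\mu(i,n+2)} = \infty.
\]
Renaming $n$ as $n-1$ (equivalently, letting $n \to \infty$ and relabeling) gives $y_{i+n-1}/\mu(i,n+1) \to \infty$, hence its reciprocal $R_n$ tends to $0$. Combining this with the identity above, $\phi(i,n) = w_i - R_n \to w_i$, which is exactly (\ref{basic2}), and the series representation follows from the definition (\ref{phidef}) of $\phi(i,n)$.

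The only delicate point is keeping the shifted indices straight between Lemmas 2.3--2.5 and the definition of $\mu(i,k)$; no genuine obstacle arises, because the hard analytic work — proving that $y_{i+n}$ grows strictly faster than $\mu(i,n+2)$ decays — has already been carried out in Lemma 2.4, and the algebraic bookkeeping between $w$ and $y$ was prepared in (\ref{wgeneral2}).
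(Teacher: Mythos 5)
Your argument is correct and is essentially identical to the paper's own proof: the identity $w_i=\phi(i,n)+\mu(i,n+1)w_{i+n}$, the substitution $w_{i+n}=1/y_{i+n-1}$ from (\ref{wgeneral2}), and the growth result $y_{i+n}/\mu(i,n+2)\to\infty$ are exactly the three ingredients the author combines to show the tail vanishes. The only discrepancy is cosmetic: your lemma numbers are each shifted by one relative to the paper (the telescoping identity is Lemma 2.4 there, and the growth estimate is Lemma 2.3), but the results you invoke are the right ones.
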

\begin{proof} The result follows from (\ref{basic}) together with
the fact, in view of (\ref{wgeneral2}) and Lemma 2.3, that
\[
\lim_{n \rightarrow \infty} \mu(i,n+1) w_{i+n}=\lim_{n
\rightarrow\infty} \frac{\mu(i,n+1)}{ y_{i+n-1}}=0.
\]

\end{proof}

\begin{lemma}\label{ds} Let $\{x_n,y_n\}_{n=0}^{\infty}$ be a solution of (\ref{general}) and assume that for an infinite sequence
of positive integers $\{k_i\}_{i=1}^{\infty}$, $\{x_{k_i}\}$ is a
bounded subsequence of $\{x_n\}$ and
\[
\lim_{i \rightarrow \infty}
\frac{x_{k_i}+\gamma_{k_i-1}}{y_{k_i}}= \lim_{i \rightarrow
\infty} w_{k_i}=M \in (0,\infty).
\]
\noindent Then the following statements are true:

\noindent 1.
\[
\lim_{i \rightarrow \infty} \lim_{n \rightarrow \infty}
\phi(k_i,n)=M.
\]

\noindent 2. For any subsequence $\{\phi(k_{i_s},n_j)\}$ of
$\{\phi(k_i,n)\}$, it holds
\[
\lim_{s \rightarrow \infty} \lim_{j \rightarrow \infty}
\phi(k_{i_s},n_j)=M.
\]

\noindent 3.
\[
\limsup_{i,n\rightarrow \infty} \phi(k_i,n) \leq M.
\]

\noindent 4.
\[
\liminf_{i,n\rightarrow \infty} \phi(k_i,n)>0.
\]

\noindent 5.
\[
\liminf_{i \rightarrow \infty} x_{k_i+1} >0.
\]

\end{lemma}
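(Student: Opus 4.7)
The master idea is that Lemma~2.5 pins down the row limit $\lim_{n \to \infty} \phi(k_i, n) = w_{k_i}$ monotonically from below, so the hypothesis $w_{k_i} \to M$ propagates into statements about the full double sequence $\{\phi(k_i, n)\}_{i,n}$. Items~(1)--(3) are then essentially immediate, while (4) is the main work and (5) follows once (4) is in hand.

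For (1) I would iterate directly: $\lim_n \phi(k_i, n) = w_{k_i}$ by Lemma~2.5, and $\lim_i w_{k_i} = M$ by hypothesis. For (2), monotonicity in $n$ gives $\phi(k_i, n) \le w_{k_i}$, so $\{\phi(k_i,n)\}$ is a bounded positive double sequence; Theorem~1.1 then lets us pass to any double subsequence, whose inner limit is $w_{k_{i_s}}$, with outer iterated limit still $M$ since $\{w_{k_i}\}$ converges to $M$. For (3), the same monotone bound combined with $w_{k_i} \to M$ gives $\phi(k_i,n) < M + \epsilon$ for all $n$ and all sufficiently large $i$, whence $\limsup_{i,n\to\infty}\phi(k_i,n) \le M$.

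For (4) I would argue by contradiction. Set $L := \liminf_{i,n\to\infty}\phi(k_i,n)$ and assume $L = 0$. Since $\{\phi(k_i,n)\}$ is bounded above by $\sup_i w_{k_i} < \infty$, a standard diagonal construction produces a double subsequence $\{\phi(k_{i_s}, n_t)\}$ satisfying the hypotheses of Theorem~1.2: it strictly decreases to $L$ and each term is a running minimum in the required sense. Theorem~1.2 then forces the iterated limit along this subsequence to equal $L$, while (2) applied to the same subsequence forces it to equal $M$. Thus $L = M > 0$, contradicting $L = 0$. The main technical obstacle is the careful extraction of the running-minimum subsequence satisfying the strict-inequality condition of Theorem~1.2; everything else in the proof is purely formal.

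For (5), again by contradiction: suppose $x_{k_{i_s}+1} \to 0$ along some subsequence. From the identity $x_{n+1} = x_n w_n/(x_n + \gamma_{n-1})$ in (\ref{wgeneral}), together with $w_{k_{i_s}} \to M$, passing to a further subsequence along which $x_{k_{i_s}}$ and $\gamma_{k_{i_s}-1}$ converge yields $x_{k_{i_s}} \to 0$. Since $w_{k_{i_s}+m} \le (M+\epsilon)/(\gamma')^m$ for each fixed $m$ and large $s$, an induction on $m$ then gives $x_{k_{i_s}+m} \to 0$ as $s \to \infty$ for every fixed $m$. Consequently $\phi(k_{i_s}, N) \to 0$ as $s \to \infty$ for each fixed $N$, being a finite sum of vanishing terms with coefficients bounded by $\gamma^{N-1}$. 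A diagonal choice $s = s(N)$ with $\phi(k_{i_{s(N)}}, N) < 1/N$ and both indices tending to infinity exhibits a double sequence along which $\phi \to 0$, contradicting~(4).
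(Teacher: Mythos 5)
Your outline follows the paper's own proof quite closely: Parts 1--3 are argued exactly as in the paper (Lemma 2.5 for the iterated limit, Theorem 1.1 for subsequences, and the monotone bound $\phi(k_i,n)<w_{k_i}$ for the limsup), and Part 5 is the same contradiction via the recursion $x_{n+1}=x_nw_n/(x_n+\gamma_{n-1})$ driving $x_{k_i+j}\to 0$ for $j=1,\dots,N$ and hence $\phi(k_i,N)\to 0$ against the lower bound from Part 4 (the paper fixes $N$ once and for all rather than diagonalizing, but this is cosmetic). The one substantive divergence is in Part 4. You declare the extraction of a Theorem 1.2 subsequence to be ``a standard diagonal construction'' and move on; the paper instead spends the bulk of its Part 4 ruling out the degenerate configurations, namely that the minimizing subsequence could have $k_{i_s}$ bounded (excluded by Part 2, since then the inner limit is $w_{k_{i_s}}>0$) or $n_j$ bounded (excluded by an inductive argument: if $\phi(k_{i_s},j)\to 0$ for $j\le N$ then $x_{k_{i_s}+t}\to 0$ for $t\le N$, whence $w_{k_{i_s}+N}\to M/m>0$ and $x_{k_{i_s}+N+1}\to 0$, forcing $\phi(k_{i_s},N+1)\to 0$ as well). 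Your framing via the joint liminf does build the unboundedness of both indices into the definition, which is why you can skip that case analysis; but the step you defer is precisely where the delicacy lives, and it is not routine: since $\phi(i,n)$ is a sum of positive terms it is \emph{strictly increasing} in $n$ for each fixed $i$, so a genuine grid subsequence $\{\phi(k_{i_s},n_t)\}$ cannot literally ``strictly decrease'' with the running-minimum property demanded by Theorem 1.2. The paper glosses over this too (it simply posits such a subsequence as the contradiction hypothesis), so you are no worse off than the source, but you should not label as standard the one step that neither you nor the paper actually carries out.
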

\begin{proof} 1. The proof follows from Lemma 2.5 and the hypothesis.

\noindent 2. The proof is an immediate consequence of the result
of Part 1 and Theorem 1.1, which is presented in the Introduction.

\noindent 3. The proof is an immediate consequence of the fact
that, for each $i \geq 1$,
\[
\phi(k_i,n) < \sum_{k=0}^{\infty}\mu(k_i,n) x_{k+k_i+1}=w_{k_i},
\;\; \text{for all} \;\;n\geq 1
\]
and the hypothesis that $w_{k_i} \rightarrow M$.

\noindent 4. The proof will be by contradiction. Assume for the
sake of contradiction that there exists
 a decreasing subsequence $\{\phi(k_{i_s},n_j)\}_{s,j=1}^{\infty}$ of
$\{\phi(k_i,n)\}$, for which
\[
\lim_{s,j \rightarrow \infty}\phi(k_{i_s},n_j)=0
\]
and
\[
\phi(k_{i_s}, n_j)<\phi(p,q), \;\; \text{for all} \;\; (p,q)<
(k_{i_s}, n_j).
\]
We claim that both $\{k_{i_s}\}$ and $\{n_j\}$ must increase to
infinity. Otherwise, for  $k_{i_s}$ finite and fixed,
\[ \lim_{j \rightarrow
\infty} \phi(k_{i_s},n_j)=0.
\]
In view of the result of Part 2 and the hypothesis, we see that
\[
\lim_{j \rightarrow \infty} \phi(k_{i_s},n_j)=w_{k_{i_s}}>0
\]
which is a contradiction.

\noindent On the other hand  assume that there exists a positive
integer $N$ such that
\[
\lim_{s \rightarrow \infty} \phi(k_{i_s},j)=0, \;\; j=1,2,\ldots,N
\;\; \text{and} \;\; \liminf_{s \rightarrow \infty}
\phi(k_{i_s},N+1)>0.
\]
In view of (\ref{phidef}), as $ s \rightarrow \infty$, it is easy
to see that
\[
x_{k_{i_s}+t}\rightarrow 0, \;\; \text{for all} \;\; t=1,\ldots,N.
\]
By choosing a further subsequence of $\{k_{i_s}\}_{s=1}^{\infty}$,
which for economy in notation we still denote it as $\{k_{i_s}\}$,
it holds that for each $j=-1,0,\ldots,N-2,\;$ the sequence
$\{\gamma_{k_{i_s}+j}\}_{s=1}^{\infty}$ converges to a positive
number. Set
\[
m=\lim_{s \rightarrow \infty} \prod_{j=-1}^{N-2}\gamma_{k_{i_s}+j}
\in (0,\infty).
\]
Clearly, and in view of (\ref{wgeneral}),
\[
w_{k_{i_s}+N} \rightarrow \frac{M}{m}>0.
\]
Therefore,
\[
x_{k_{i_s}+N+1}=\frac{x_{k_{i_s}+N}
w_{k_{i_s}+N}}{\gamma_{k_{i_s}+N-1}+x_{k_{i_s}+N}} \rightarrow 0
\]
and so, in view of (\ref{phidef}),
\[
\lim_{s \rightarrow \infty} \phi(k_{i_s},N+1)=0
\]
which is a contradiction. Therefore, the sequences $\{k_{i_s}\}$
and $\{n_j\}$ are infinite sequences of positive integers and both
increase to infinity. By applying Theorem 1.2, we get
\[
\lim_{s,j \rightarrow \infty}\phi(k_{i_s},n_j)=\lim_{s \rightarrow
\infty} \lim_{j \rightarrow \infty} \phi(k_{i_s},n_j)=0.
\]
On the other hand, by applying the result of Part 2, we see that
\[
\lim_{s \rightarrow \infty} \lim_{j \rightarrow \infty}
\phi(k_{i_s},n_j)=M \in(0,\infty)
\]
which is a contradiction.

\noindent 5. From Part 4, clearly, there exists a positive number
$I$ such that
\[
\phi(k_i,n) >I, \;\; \text{for all} \;\; i,n \geq N.
\]
In particular,
\begin{equation}\label{liminf}
\phi(k_i,N)=x_{k_i+1}+\gamma_{k_i-1}
x_{k_i+2}+\ldots+\gamma_{k_i-1}\cdots\gamma_{k_i-3+N} x_{k_i+N}
>I>0,
\end{equation}
for all $i \geq N$. Now assume for the sake of contradiction and
without loss of generality that
\[
x_{k_i+1} \rightarrow 0.
\]
Note that
\[
w_{k_i}=x_{k_i+1}+\gamma_{k_i-1} w_{k_i+1} \Rightarrow
w_{k_i+1}=\frac{w_{k_i}}{\gamma_{k_i-1}}-\frac{x_{k_i+1}}{\gamma_{k_i-1}}
\]
and so there exists a further subsequence of
$\{k_i\}_{i=1}^{\infty}$, which for economy in notation we still
denote as  $\{k_i\}$, such that
\[
\gamma_{k_i-1}\rightarrow m>0 \;\; \text{and} \;\; w_{k_i+1}
\rightarrow \frac{M}{m},
\]
and so
\[
x_{k_i+2} =\frac{x_{k_i+1} w_{k_i+1}}{\gamma_{k_i}+ x_{k_i+1}}
\rightarrow 0.
\]
By induction, we see that
\[
\lim_{i \rightarrow \infty} x_{k_i+j}=0, \;\; \text{for all}\;\;
j=1,2,\ldots,N.
\]
By taking limits  in (\ref{liminf}), as $i \rightarrow \infty$, we
get   a contradiction.

\end{proof}
\noindent We now present the proof of Theorem 2.1
\begin{proof}Let $\{x_n,y_n\}_{n=0}^{\infty}$ be a solution of System (\ref{general}). First we establish that the
component $\{y_n\}_{n=0}^{\infty}$ of the solution is bounded from
below by a positive constant. Assume for the sake of contradiction
that there exists an infinite sequence of indices
$\{n_i\}_{i=1}^{\infty}$ such that
\[
y_{n_i+1}=x_{n_i}+\gamma_{n_i} y_{n_i}\rightarrow 0.
\]
Clearly,
\[
x_{n_i-t} \rightarrow 0 \;\; \text{and} \;\; y_{n_i-t} \rightarrow
0, \;\; \text{for all} \;\; t=0,1,\ldots\;.
\]
In addition, there exists a sequence of indices
$\{k_i\}_{i=1}^{\infty}$ such that
\[
k_i \leq n_i, \;\; \text{for all} \;\; i,
\]
for which
\begin{equation}\label{y0}
(y_{k_i-1} \geq 1 \;\text{and} \; y_{k_i}<1) \;\text{and} \;
(y_{t}<1, \;\; \text{for all} \;\; t \in \{k_i+1,\ldots,n_i\}),
\end{equation}
because otherwise,
\[
x_{n_i}=\frac{x_0}{\prod_{j=0}^{n_i-1}y_j}>x_0,
\]
which is a contradiction. From
\[
y_{k_i}=x_{k_i-1}+\gamma_{k_i-1} y_{k_i-1} \;\;\text{and} \;\;
y_{k_i-1} \geq 1, \;\; \text{for all} \;\; i,
\]
it follows that
\[
y_{k_i} \geq \gamma_{k_i-1}\geq \gamma^{\prime}, \;\; \text{for
all}\; \; i,
\]
and so
\[
y_{k_i} \in [\gamma^{\prime},1), \;\; \text{for all} \;\; i.
\]
For $i$ sufficiently large, when $r\in \{k_i+1,\ldots,n_i\}$,
\[
x_r =\frac{x_{r-1}}{y_{r-1}}>x_{r-1}
\]
and more precisely,
\[
x_{n_i}>x_{n_i-1}>\ldots>x_{k_i+1}>x_{k_i}.
\]
Therefore,
\[
x_{k_i}<x_{n_i},
\]
from which it follows that $x_{k_i} \rightarrow 0$. By utilizing
the fact that
\[
y_{k_i} \in [\gamma^{\prime},1), \;\; \text{for all} \;\; i,
\]
we may select a further subsequence of $\{k_i\}$, still denoted as
$\{k_i\}$ such that
\[
y_{k_i} \rightarrow L \in [\gamma^{\prime},1] \;\; \text{and} \;\;
\gamma_{k_i-1}\rightarrow l_{-1} \in [\gamma^{\prime},\gamma].
\]
Therefore,
\[
x_{k_i+1}=\frac{x_{k_i}}{y_{k_i}} \rightarrow 0 \;\; \text{and}
\;\;  w_{k_i}=\frac{\gamma_{k_i-1}+x_{k_i}}{y_{k_i}} \rightarrow
\frac{l_{-1}}{L}=M \in \left[\gamma^{\prime},
\frac{\gamma}{\gamma^{\prime}}\right].
\]
By applying Lemma \ref{ds}, we get
\[
\liminf_{i \rightarrow \infty}x_{k_i+1}> 0
\]
which is a contradiction. Hence, the component
$\{y_n\}_{n=0}^{\infty}$ of the solution is bounded from below by
a positive constant $m$. In view of
\[
x_{n+1}=\frac{x_n}{y_n}=\frac{1}{y_{n-1}} \cdot
\frac{x_n}{x_n+\gamma_{n-1}}, \;\; \text{for all} \;\; n \geq 1,
\]
we see that
\[
x_{n+1}<\frac{1}{m}, \;\; \text{for all} \;\; n \geq 1,
\]
and so the component $\{x_n\}_{n=0}^{\infty}$ is bounded from
above. From the second equation of the system, clearly
\[
y_{n+1}<\frac{1}{m}+\gamma y_n, \;\; \text{for all} \;\; n \geq 2,
\]
and so
\[
\limsup_{n \rightarrow \infty} y_n \leq \frac{1}{m(1-\gamma)}.
\]
The proof of the Theorem is complete.

\end{proof}

\end{document}